\newcommand{\de}{\mathrm{d}}
\newcommand{\Sym}{\mathrm{Sym}}
\newcommand{\Def}{\mathrm{Def}}
\newcommand{\Tr}{\mathrm{Tr}\,}
\newtheorem{lemma}{Lemma}
\title[Linearly foliated Calabi--Yau $n$-folds]%
      {Linearly foliated Calabi--Yau $n$-folds\\ I. First-order deformations} 
\author{Antonio Ricco} 
\begin{document}
\begin{abstract}   
We consider classes of noncompact $n$-folds with trivial canonical bundle, 
that are linear foliations on nonsingular projective varieties, 
in general without a projection to the base.
We obtain them as first-order deformations of total spaces of
vector bundles on those varieties.  
\end{abstract}
\address{Center of Mathematical Sciences \\ Zhejiang University \\ Hangzhou}

\maketitle
\thispagestyle{empty}

\tableofcontents

\section*{Introduction}

A very important question in geometry, and also a hard one, is related to the classification
of algebraic varieties with trivial canonical bundle, known as Calabi--Yau $n$-folds,
in particular in dimension three.
In fact, Calabi--Yau $n$-folds can be considered as the complex analogue
of oriented real $n$-manifolds, \cite{Thomas}.
Moreover, the lack of understanding of the Calabi--Yau case is regarded as one of the 
principal gaps in the classification of algebraic threefolds, 
see for example \cite{Gross} and the references therein.

On the other hand, a much simpler question
concerns the local aspect of a Calabi--Yau $n$-fold along an embedded subvariety, 
that is, the description of the tubular neighborhood of a subvariety in a Calabi--Yau $n$-fold. 
In this note we will focus on the case in which the subvariety is 
nonsingular, considering projective Calabi--Yau $n$-folds.

Let $X$ be a nonsingular projective variety with trivial canonical bundle
and let $M$ be a nonsingular subvariety of $X$.
The normal cone to $M$ in $X$ is isomorphic to a vector bundle on $M$, 
with rank equal to the codimension of $M$ in $X$,
and called the normal bundle of $M$ in $X$.
We will study deformations to the normal cone at the first order, 
for details see \cite{Fulton},
that is, first-order deformations of 
total spaces of vector bundles on nonsingular projective varieties.
Moreover we will impose on them the vanishing of the canonical bundle.

This note is organized as follows.
Let $M$ be a nonsingular $m$-dimensional projective variety, and $V \to M$
a rank-$r$ vector bundle.
Wanting to describe the group $H^1(V,T_V)$, see \cite{Artin}, 
we will first notice (Section \ref{sec:exactsequence})
that we have a non-canonical isomorphism of the form 
\begin{eqnarray}
  H^1 (V, T_V) &\simeq& H_{\mathrm{vert}} \oplus H_{\mathrm{horiz}} \nonumber
\end{eqnarray}
with
\begin{eqnarray}
    H_{\mathrm{vert}} &:=&  H^1(M, V \otimes \Sym V^*)/\mathrm{im} (\delta_1 )\nonumber \\ 
    H_{\mathrm{horiz}} &:=&  \mathrm{ker} \left(\delta_2: H^1(M, T_M \otimes \Sym V^*) 
                                        \to H^2 (M, V \otimes \Sym V^*) \right) \nonumber
\end{eqnarray}
where $\Sym V^*$ is the symmetric algebra of the dual bundle of $V$.
In fact, this splitting is given by a long exact sequence in cohomology, induced 
by the Atiyah sequence
\begin{eqnarray}
  0 \to \pi^* V \to T_V \to \pi^* T_M \to 0 , \nonumber
\end{eqnarray}
and $\delta_q : H^q(M, T_M \otimes \Sym V^*) \to H^{q+1} (M, V \otimes \Sym V^*)$
are the usual connecting morphisms.
In such a way, we can reduce the deformation problem
for the total space of $V$ to the study of groups,
$H_{\mathrm{vert}}$ and $H_{\mathrm{horiz}}$, that are
defined in terms of cohomology groups on the base $M$.  Those groups
and the maps among them can be quite explicitely computed in \v{C}ech
cohomology (Section \ref{sec:cech}).

For us a Calabi--Yau variety is a variety $X$ with trivial
canonical bundle, i.e. such that $K_X \simeq \mathcal{O}_X$.
Thus, the total bundle of $V$ is a Calabi--Yau variety when 
$\det V \simeq K_M$. 
Given the previous splitting of $H^1(V, T_V)$ into groups
defined on the base $M$, we would like to understand which deformations 
preserve the Calabi--Yau structure of the total space of $V$.
We will observe that there exists a map (Section \ref{sec:CYcondition})
\begin{eqnarray}
  \partial^{\#} : H^1(V, T_V) \to H^1(V, \mathcal{O}_V) \nonumber
\end{eqnarray}
from the deformations of $V$ to the deformations of its
canonical bundle, whose kernel, in the case in which $V$ is Calabi--Yau,
gives the deformations with trivial canonical bundle.
This map is induced in cohomology by the differential map
\begin{eqnarray}
  \partial: T_V \to \mathcal{O}_V . \nonumber
\end{eqnarray}

Finally, in the last section, we will consider the case of a nonsingular curve 
in a nonsingular Calabi--Yau $n$-fold.

\section{Vector bundles and nonlinear deformations}

\subsection{An exact sequence in cohomology}\label{sec:exactsequence}

Let $\pi: V \to M$ be a rank-$r$ vector bundle over a $m$-dimensional 
nonsingular projective variety $M$. 
We will use the same symbol for the vector bundle, for the 
locally-free sheaf of its sections, and for its total space.

We would like to describe the group $H^1(V, T_V)$, of first order deformations
of the total space of $V$, in terms of groups defined on
the base $M$. To this aim, let us first notice the following.

\begin{lemma}\label{lem:isomorph}
  Let $E \to M$ a vector bundle on $M$ and $\pi^* E \to V$ its pullback bundle
  over the total space of $V$. Then
  \begin{eqnarray}
     H^i (V, \pi^*E) \simeq H^i (M, E\otimes\Sym V^*) , \nonumber
  \end{eqnarray}
  where $V^*$ denotes the dual bundle of $V$ and
  $\Sym V^* := \bigoplus_{d \geq 0} \Sym^d V^*$ is the symmetric 
  $\mathcal{O}_M$-algebra of $V^*$.
\end{lemma}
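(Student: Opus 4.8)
The key point is that $\pi\colon V\to M$ is an affine morphism, so higher direct images vanish and cohomology on the total space can be computed on the base. Concretely, I would proceed as follows.

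\medskip

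\textbf{Step 1: Compute $\pi_*\mathcal{O}_V$.} The total space of the vector bundle $V\to M$ is $\mathrm{Spec}$ of the sheaf of algebras $\Sym V^*$ over $M$: a local trivialization $V|_U\simeq U\times \mathbb{A}^r$ gives $\mathcal{O}_V(\pi^{-1}U)=\mathcal{O}_M(U)[x_1,\dots,x_r]$, and the $x_i$ are precisely a basis of $V^*|_U$, so patching over $M$ yields $\pi_*\mathcal{O}_V\simeq\Sym V^*=\bigoplus_{d\ge0}\Sym^d V^*$ as $\mathcal{O}_M$-algebras. This is the standard description of the total space; I would state it with a one-line local check.

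\medskip

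\textbf{Step 2: Push forward $\pi^*E$ and use the projection formula.} Since $\pi$ is affine, $\pi^*E$ is the pullback of a coherent sheaf and $R^q\pi_*(\pi^*E)=0$ for $q>0$; by the projection formula $\pi_*(\pi^*E)\simeq E\otimes_{\mathcal{O}_M}\pi_*\mathcal{O}_V\simeq E\otimes\Sym V^*$. (Flatness of $\pi$ makes the projection formula apply on the nose, without derived functors.)

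\medskip

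\textbf{Step 3: Descend cohomology via the Leray spectral sequence.} Because $\pi$ is affine, $R^q\pi_*\mathcal{F}=0$ for $q>0$ for any quasi-coherent $\mathcal{F}$ on $V$; applied to $\mathcal{F}=\pi^*E$ this collapses the Leray spectral sequence $E_2^{p,q}=H^p(M,R^q\pi_*\pi^*E)\Rightarrow H^{p+q}(V,\pi^*E)$ to the edge isomorphism
\begin{eqnarray}
  H^i(V,\pi^*E)\;\simeq\;H^i\!\bigl(M,\pi_*\pi^*E\bigr)\;\simeq\;H^i\!\bigl(M,E\otimes\Sym V^*\bigr),\nonumber
\end{eqnarray}
the last step by Step 2. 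Alternatively, avoiding spectral sequences entirely: take an affine open cover $\{U_\alpha\}$ of $M$; then $\{\pi^{-1}U_\alpha\}$ is an affine open cover of $V$, and the \v{C}ech complex of $\pi^*E$ on this cover is literally the \v{C}ech complex of $\pi_*\pi^*E=E\otimes\Sym V^*$ on $\{U_\alpha\}$, since $(\pi^*E)(\pi^{-1}U)=(E\otimes\Sym V^*)(U)$ for $U$ affine. Taking cohomology of the identical complexes gives the claim — and this \v{C}ech description is exactly what will be needed in Section~\ref{sec:cech}, so I would present it this way.

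\medskip

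\textbf{Main obstacle.} There is no serious obstacle; the only thing to be careful about is the grading/convergence bookkeeping. The sheaf $\Sym V^*$ is an infinite direct sum, so one should note that cohomology commutes with the (locally finite, or in the quasi-compact setting harmless) direct sum $\bigoplus_d$, giving $H^i(M,E\otimes\Sym V^*)=\bigoplus_{d\ge0}H^i(M,E\otimes\Sym^d V^*)$; on a Noetherian space this is automatic. One should also make explicit that "$\pi^*$" and "total space" are being used compatibly — i.e. that the pullback of $E$ along $\pi$ as a map of schemes agrees with the bundle-theoretic pullback — which is immediate from the local description in Step 1.
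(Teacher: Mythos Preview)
Your proposal is correct and follows essentially the same route as the paper: use that $\pi$ is affine to get $H^i(V,\pi^*E)\simeq H^i(M,\pi_*\pi^*E)$, apply the projection formula to obtain $\pi_*\pi^*E\simeq E\otimes\pi_*\mathcal{O}_V$, and identify $\pi_*\mathcal{O}_V\simeq\Sym V^*$. Your added \v{C}ech reformulation and the remark on the direct-sum decomposition are extra detail rather than a different argument.
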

\begin{proof}
Since $\pi: V \to M$ is an affine morphism,
for any quasi-coherent sheaf $\mathcal{E}$ on $V$ we have 
$H^i ( V, \mathcal{E} ) \simeq H^i ( M, \pi_* \mathcal{E} )$.
Thus, in particular, 
\begin{eqnarray}
   H^i(V, \pi^*E) \simeq H^i(M, \pi_*\pi^* E). \nonumber
\end{eqnarray}
Since $E$ is a locally-free sheaf of finite rank,
we now use the projection formula, that gives $\pi_*\pi^* E = E \otimes \pi_*\mathcal{O}_V$,
and, together with $\pi_*\mathcal{O}_V \simeq \Sym V^{*}$, 
the result.
\end{proof}

Let us also notice that we have the exact sequence of vector bundles  
(\emph{Atiyah sequence})
\begin{eqnarray}\label{eq:atiyahsequence}
  0 \to \pi^* V \to T_V \to \pi^* T_M \to 0.
\end{eqnarray}

\begin{lemma}\label{exactseq}
The sequence (\ref{eq:atiyahsequence}) induces the long exact sequence
\begin{eqnarray}
0 &\to&  H^0(M, V\otimes \Sym V^*) \to H^0(V,T_V) 
       \to  H^0(M, T_M \otimes \Sym V^* ) \to  \nonumber \\
&\to& H^1(M, V\otimes \Sym V^*) \to  \cdots \nonumber
\end{eqnarray}
\end{lemma}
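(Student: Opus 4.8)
The plan is to apply the long exact sequence in sheaf cohomology to the short exact sequence (\ref{eq:atiyahsequence}) on $V$, and then rewrite every term involving a pulled-back bundle using Lemma \ref{lem:isomorph}. Concretely, the Atiyah sequence
\begin{eqnarray}
  0 \to \pi^* V \to T_V \to \pi^* T_M \to 0 \nonumber
\end{eqnarray}
is a short exact sequence of coherent (indeed locally-free) sheaves on the scheme $V$, so it gives rise to the usual long exact sequence
\begin{eqnarray}
  0 \to H^0(V, \pi^*V) \to H^0(V, T_V) \to H^0(V, \pi^* T_M)
    \to H^1(V, \pi^*V) \to \cdots \nonumber
\end{eqnarray}
with connecting homomorphisms in each degree. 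This step is entirely standard and needs no more than a citation to the existence of the long exact cohomology sequence associated to a short exact sequence of sheaves; the only mild point is that $V$ is a scheme (noncompact, non-projective), but sheaf cohomology and its long exact sequence are available for arbitrary schemes, so nothing is lost.

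Next I would substitute. Applying Lemma \ref{lem:isomorph} with $E = V$ gives $H^i(V, \pi^* V) \simeq H^i(M, V \otimes \Sym V^*)$, and with $E = T_M$ gives $H^i(V, \pi^* T_M) \simeq H^i(M, T_M \otimes \Sym V^*)$. Plugging these isomorphisms into the long exact sequence above yields exactly the displayed sequence of the lemma, truncated after $H^1(M, V\otimes\Sym V^*)$. Since the statement only asserts exactness up to that term (with the ``$\cdots$'' continuing), I do not even need to track the higher-degree terms, though they follow identically.

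The one genuine subtlety — and the step I expect to be the main obstacle, or at least the one deserving a word of care — is naturality: I should make sure the isomorphisms of Lemma \ref{lem:isomorph} are compatible with the maps in the Atiyah sequence, so that replacing the $H^i(V,\pi^*E)$ terms by the $H^i(M, E\otimes\Sym V^*)$ terms really produces an exact sequence and not merely a diagram with exact rows up to unspecified identifications. This follows because the isomorphism $H^i(V,\pi^*\mathcal{E}) \simeq H^i(M,\pi_*\pi^*\mathcal{E})$ coming from $\pi$ affine is functorial in $\mathcal{E}$, and the projection-formula identification $\pi_*\pi^*E \simeq E \otimes \Sym V^*$ is natural in $E$; hence the map $\pi^* V \to T_V$ and the map $T_V \to \pi^* T_M$ are carried to honest $\mathcal{O}_M$-module maps $V\otimes\Sym V^* \to \pi_* T_V$ and $\pi_* T_V \to T_M\otimes\Sym V^*$, and exactness is preserved because $\pi_*$ is exact on quasi-coherent sheaves along the affine morphism $\pi$. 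With that observation in place, the long exact sequence of the lemma is just the pushforward of the long exact sequence of (\ref{eq:atiyahsequence}), read through these natural identifications, and the proof is complete.
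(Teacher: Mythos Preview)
Your proof is correct and follows exactly the same approach as the paper, which simply invokes the long exact cohomology sequence of (\ref{eq:atiyahsequence}) together with Lemma~\ref{lem:isomorph}. Your additional remarks on naturality of the identifications are more careful than what the paper records, but they do not constitute a different method.
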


\begin{proof}
Using the previous lemma and the usual long exact sequence
induced in cohomology by (\ref{eq:atiyahsequence}).
\end{proof}

Finally, let us notice 
that the image of the map
\begin{eqnarray}
  H^1(M, V\otimes \Sym V^*) \to H^1(V, T_V) \nonumber 
\end{eqnarray}
gives the deformations for which there exists a projection 
to the base $M$.

\subsection{The sequence in \v{C}ech cohomology}\label{sec:cech}

Let us begin with some notational remark.
In the following we will consider locally-free sheaves $E$ of sections of vector bundles
which have been trivialized on an open covering $\mathcal{U} = \{ U_\alpha \}$, 
so each section over $U_\alpha$ will be identified with vectors of functions $f_\alpha$ 
on $U_\alpha$. 
As usual, changing trivialization we will have to multiply by the appropriate transition function.
As a notational choice, the higher cochains $f_{\alpha_0\alpha_1\dots\alpha_q}$, defined on $U_{\alpha_0\alpha_1\dots \alpha_q}$, 
will be given by vectors of functions using the trivialization on $U_{\alpha_q}$.
In this notation, if $E_{\alpha\beta}$ are the transition functions of $E$ on $U_{\alpha\beta}$,
the \v{C}ech differential 
\begin{eqnarray}
\delta : \check{C}^{\bullet} (\mathcal{U}, E) \to \check{C}^{\bullet} (\mathcal{U}, E) \nonumber
\end{eqnarray}
will act on a $q$-cochain $\{ f_{\alpha_0 \cdots \alpha_q} \}$ as
\begin{eqnarray}
(\delta f)_{\alpha_0 \cdots \alpha_{q+1}} = \left.
              \sum_{j=0}^{q} (-)^j f_{\alpha_0 \cdots \hat{\alpha}_j \cdots \alpha_{q+1}} +
              (-)^{q+1} E_{\alpha_{q+1} \alpha_q} f_{\alpha_0 \cdots \alpha_{q}} 
                       \right|_{U_{\alpha_0 \cdots \alpha_{q+1}}} \ . \nonumber
\end{eqnarray}
When using a different trivialization, we will explicitely indicate it with a superscript, i.e.  
$$f_{\alpha_0\dots\alpha_k\dots\alpha_q}^{(\alpha_k)} : =  E_{\alpha_{k} \alpha_q} f_{\alpha_0 \dots \alpha_k \dots \alpha_{q}}.$$

Let again $\pi: V \to M$ be a rank-$r$ vector bundle over a $m$-dimensional 
nonsingular projective variety $M$ over $\mathbb{C}$.
Let $\mathcal{U}=\{U_\alpha\}$ be a sufficiently fine open covering for $M$ over which $V$ trivializes, 
with local coordinates $\{ z_\alpha = (z_\alpha^1, \dots, z_\alpha^m) \}$. 
Let $\{f_{\alpha \beta}(z_\beta) = (f^1_{\alpha \beta}(z_\beta), \dots, f^m_{\alpha \beta}(z_\beta) )\}$ 
be the transition functions for the given atlas
and $V_{\alpha \beta}(z_\beta)$ the matrix of transition functions for $V$.
An open covering of the total space of $V$ 
is given by $\mathcal{W} = \{W_\alpha \}$, where  $W_\alpha:= U_\alpha \times \mathbb{C}^r$,
and the transition functions on $W_{\alpha}\cap W_{\beta}$ are
\begin{eqnarray}\label{eq:vectorbundlecoordinates}
\left\{\begin{array}{rcl}
   z_{\alpha}  &=& f_{\alpha \beta} (z_\beta)  \\ 
   w_{\alpha} &=& V_{\alpha\beta} (z_\beta) \,  w_{\beta}   
\end{array}\right.
\end{eqnarray}
where $w_\alpha = \left( w^1_\alpha, \dots, w^r_\alpha \right)$ are coordinates
on the fibre $\mathbb{C}^r$.

The first order deformations of the total space of $V$, i.e.  
a fibre $p^{-1}(\epsilon)$ of a family $p:\mathcal{V} \to \mathrm{Spec} (\mathbb{C}[\epsilon] / \epsilon^2)$
with central fibre $V$, can be represented by the transition functions
on $W_{\alpha}\cap W_{\beta}$
\begin{eqnarray}
   \left(\begin{array}{c}
          z_{\alpha} \\
          w_{\alpha}
         \end{array} \right) =    
   \left(\begin{array}{c}
          f_{\alpha \beta} (z_\beta) \\
          V_{\alpha\beta} (z_\beta)  w_{\beta}
         \end{array} \right) +
   \epsilon M_{\alpha\beta} (z_\beta, w_\beta)
   \left(\begin{array}{c}
           \zeta_{\alpha\beta} \\
           \omega_{\alpha \beta}
         \end{array} \right) 
\end{eqnarray}
where $\{ (  \zeta_{\alpha\beta},  \omega_{\alpha\beta} )\}$ are regular
on $W_{\alpha}\cap W_{\beta}$, 
and $\epsilon^2 = 0$.
For convenience, we multiplied the deformation term by
\begin{eqnarray}
M_{\alpha\beta} :=
    \left(\begin{array}{cc}
          J_{\alpha\beta} & 0 \\
          R_{\alpha\beta} & V_{\alpha\beta}
         \end{array} \right)  ,
\end{eqnarray}
where $V_{\alpha \beta}$ is the matrix of transition functions for $V$, 
and $ J_{\alpha \beta}$ and $R_{\alpha \beta}$ are the $m \times m$ and $m \times r$ 
matrices with coefficients
\begin{eqnarray}
\left(J_{\alpha \beta}\right)^a_{\phantom{a}b} : = \frac{\partial f^a_{\alpha \beta}}{\partial z^b_\beta} \ ,  \qquad 
\left(R_{\alpha \beta}\right)^i_{\phantom{i}b} : = 
              \sum_{j=1}^{r} \frac{\partial \left(V_{\alpha \beta}\right)^i_{\phantom{i}j}}{\partial z^b_\beta} w^j_\beta  
\end{eqnarray}
with $a, b = 1, \dots, m$ and  $i, j = 1, \dots, r$. Notice that $J_{\alpha\beta}$ is the matrix
of transition functions of $T_M$ on $U_{\alpha}\cap U_{\beta}$ and $M_{\alpha\beta}$ the one of $T_V$ on 
$W_{\alpha}\cap W_{\beta}$.

As expected, we have the following
\begin{lemma}
  The deformation terms $\{(\zeta_{\alpha\beta}, \omega_{\alpha\beta})\}$
  are cocycles representing elements in the \v{C}ech cohomology group $\check{H}^1(V, T_V)$.
\end{lemma}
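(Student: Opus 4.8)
The plan is to show that the cocycle condition for the deformed family is equivalent to the statement that $\{(\zeta_{\alpha\beta},\omega_{\alpha\beta})\}$ is a \v{C}ech $1$-cocycle valued in $T_V$, with respect to the covering $\mathcal{W}=\{W_\alpha\}$. First I would recall that a first-order deformation, presented by transition functions on triple overlaps, is well-defined (i.e. patches to an actual scheme over $\mathrm{Spec}(\mathbb{C}[\epsilon]/\epsilon^2)$) precisely when the compositions on $W_\alpha\cap W_\beta\cap W_\gamma$ agree, which is the cocycle identity modulo $\epsilon^2$. So the bulk of the argument is to expand the composition law
\begin{eqnarray}
  \Phi_{\alpha\gamma} = \Phi_{\alpha\beta}\circ\Phi_{\beta\gamma} \pmod{\epsilon^2} \nonumber
\end{eqnarray}
for the maps $\Phi_{\alpha\beta}$ given in \eqref{eq:vectorbundlecoordinates} plus the $\epsilon$-correction, and read off what it says about the correction terms.

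The key computational step is the Taylor expansion to first order in $\epsilon$. Writing $\Phi^0_{\alpha\beta}$ for the undeformed transition map $(z_\beta,w_\beta)\mapsto(f_{\alpha\beta},V_{\alpha\beta}w_\beta)$ and the correction as $\epsilon\,\Xi_{\alpha\beta}$ with $\Xi_{\alpha\beta}=M_{\alpha\beta}(\zeta_{\alpha\beta},\omega_{\alpha\beta})^{T}$, the composite picks up two pieces at order $\epsilon$: the term $\Xi_{\alpha\beta}\circ\Phi^0_{\beta\gamma}$ coming from the outer map's correction, and the term $D\Phi^0_{\alpha\beta}\cdot\Xi_{\beta\gamma}$ coming from differentiating the outer undeformed map against the inner correction. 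Here is exactly where the prefactor $M_{\alpha\beta}$ earns its keep: the Jacobian $D\Phi^0_{\alpha\beta}$ of the undeformed transition map, in the coordinates $(z,w)$, is precisely the block matrix
\begin{eqnarray}
  \left(\begin{array}{cc} J_{\alpha\beta} & 0 \\ R_{\alpha\beta} & V_{\alpha\beta}\end{array}\right) = M_{\alpha\beta}, \nonumber
\end{eqnarray}
as already observed in the text (it is the transition matrix of $T_V$). Hence $D\Phi^0_{\alpha\beta}\cdot\Xi_{\beta\gamma}=M_{\alpha\beta}M_{\beta\gamma}(\zeta_{\beta\gamma},\omega_{\beta\gamma})^{T}$, and using the cocycle relation $M_{\alpha\beta}M_{\beta\gamma}=M_{\alpha\gamma}$ (valid because $T_V$ is a vector bundle, so its transition matrices satisfy the cocycle condition, possibly after using the chain rule for $f$ and the cocycle relation for $V$) together with $\Xi_{\alpha\gamma}=M_{\alpha\gamma}(\zeta_{\alpha\gamma},\omega_{\alpha\gamma})^{T}$, the equality $\Phi_{\alpha\gamma}=\Phi_{\alpha\beta}\circ\Phi_{\beta\gamma}$ at order $\epsilon$ becomes, after cancelling the common left factor $M_{\alpha\gamma}$,
\begin{eqnarray}
  \left(\begin{array}{c}\zeta_{\alpha\gamma}\\ \omega_{\alpha\gamma}\end{array}\right)
    = \left(\begin{array}{c}\zeta_{\beta\gamma}\\ \omega_{\beta\gamma}\end{array}\right)
    + M_{\alpha\gamma}^{-1}\,\Xi_{\alpha\beta}\circ\Phi^0_{\beta\gamma}. \nonumber
\end{eqnarray}
Recognizing $M_{\alpha\gamma}^{-1}\Xi_{\alpha\beta}\circ\Phi^0_{\beta\gamma}=M_{\alpha\beta}^{-1}M_{\beta\gamma}^{-1}\cdot M_{\alpha\beta}(\zeta_{\alpha\beta},\omega_{\alpha\beta})^{T}\circ\Phi^0_{\beta\gamma}$, which in the chosen trivialization convention is exactly the pulled-back-and-retrivialized cochain $(\zeta_{\alpha\beta},\omega_{\alpha\beta})^{(\text{via }\beta)}$, this is precisely the vanishing of the \v{C}ech coboundary $(\delta(\zeta,\omega))_{\alpha\beta\gamma}=0$ for $T_V$ with the sign conventions fixed in Section \ref{sec:cech}.

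I would then close by noting the converse and the quotient by coboundaries: two deformation cocycles give isomorphic families iff they differ by a coboundary, which corresponds to changing the $\epsilon$-part of the local trivializations by $(z_\alpha,w_\alpha)\mapsto(z_\alpha,w_\alpha)+\epsilon\,h_\alpha$; expanding the induced change in transition functions reproduces the \v{C}ech coboundary of $\{h_\alpha\}$. Hence the deformation classes are in bijection with $\check{H}^1(\mathcal{W},T_V)$, and since $\mathcal{W}$ is an affine (indeed, by Lemma \ref{lem:isomorph}-type reasoning, acyclic enough) covering of $V$, this computes $H^1(V,T_V)$. The main obstacle is purely bookkeeping: keeping the trivialization conventions of Section \ref{sec:cech} straight through the chain-rule expansion, so that the block-matrix identity $D\Phi^0_{\alpha\beta}=M_{\alpha\beta}$ and the cocycle relation $M_{\alpha\beta}M_{\beta\gamma}=M_{\alpha\gamma}$ are applied with the correct arguments $z_\gamma$ versus $z_\beta$ and the correct transition-matrix insertions; once the prefactor $M_{\alpha\beta}$ is seen to be the Jacobian of the undeformed map, the rest is forced.
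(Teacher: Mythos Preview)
Your proposal is correct and follows essentially the same route as the paper: expand the triple-overlap compatibility $\Phi_{\alpha\gamma}=\Phi_{\alpha\beta}\circ\Phi_{\beta\gamma}$ to first order in $\epsilon$, identify the Jacobian of the undeformed map with $M_{\alpha\beta}$, and read off the \v{C}ech cocycle condition, then obtain the coboundary relation from a first-order change of local coordinates. The only slip is the ordering in $M_{\alpha\gamma}^{-1}=M_{\beta\gamma}^{-1}M_{\alpha\beta}^{-1}$ (not $M_{\alpha\beta}^{-1}M_{\beta\gamma}^{-1}$), after which the last term simplifies to $M_{\gamma\beta}(\zeta_{\alpha\beta},\omega_{\alpha\beta})^{T}$, matching the paper's cocycle identity exactly.
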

\begin{proof}

The compatibility on triple intersections gives
\begin{eqnarray}
  J_{\alpha\gamma}  \zeta_{\alpha\gamma} &=&  J_{\alpha\gamma}  \zeta_{\beta\gamma} 
                                                   + J_{\alpha\beta}  \zeta_{\alpha\beta} \nonumber \\
  R_{\alpha\gamma} \zeta_{\alpha\gamma}  + V_{\alpha\gamma} \omega_{\alpha\gamma} &=& 
                 R_{\alpha\gamma} \zeta_{\beta\gamma} + V_{\alpha\gamma} \omega_{\beta\gamma} + 
                 R_{\alpha\beta} \zeta_{\alpha\beta} + V_{\alpha\beta} \omega_{\alpha\beta} \nonumber
\end{eqnarray}
i.e. the cocycle conditions
\begin{eqnarray}
   \left(\begin{array}{c}
          \zeta_{\alpha\gamma} \\
          \omega_{\alpha\gamma}
         \end{array} \right) =    
   \left(\begin{array}{c}
          \zeta_{\beta\gamma} \\
          \omega_{\beta\gamma}
         \end{array} \right) +
   M_{\gamma\beta}
   \left(\begin{array}{c}
           \zeta_{\alpha\beta} \\
           \omega_{\alpha \beta}
         \end{array} \right) \nonumber
\end{eqnarray}
If we impose the equivalence of two deformations when they are related by a change of local coordinates of the form
\begin{eqnarray}
   \left(\begin{array}{c}
          z_{\alpha}' \\
          w_{\alpha}'
         \end{array} \right) :=    
   \left(\begin{array}{c}
          z_\alpha \\
          w_{\beta}
         \end{array} \right) +
   \epsilon 
   \left(\begin{array}{c}
           h_{\alpha}(z_\alpha,w_\alpha) \\
           g_{\alpha}(z_\alpha,w_\alpha)
         \end{array} \right) \nonumber
\end{eqnarray}
we obtain the coboundary conditions
\begin{eqnarray}
   \left(\begin{array}{c}
          \zeta_{\alpha\beta}' \\
          \omega_{\alpha\beta}'
         \end{array} \right) -   
   \left(\begin{array}{c}
          \zeta_{\alpha\beta} \\
          \omega_{\alpha\beta}
         \end{array} \right) =    
   \left(\begin{array}{c}
          h_\alpha \\
          g_{\beta}
         \end{array} \right) -
   M_{\alpha\beta} 
   \left(\begin{array}{c}
           h_{\beta} \\
           g_{\beta}
         \end{array} \right)  \nonumber
\end{eqnarray}
and the statement.
\end{proof}

Let as before $\pi: V \to M$ be a vector bundle and let us consider another 
vector bundle $E \to M$ and its pullback bundle $\pi^* E \to V$.
A section $\{ \eta_\alpha (z_\alpha, w_\alpha) \}$ of $\pi^* E$ on $\cup W_\alpha \simeq \cup ( U_\alpha \times \mathbb{C}^r)$
can be expanded in polynomials of degree $d$ in $\{ w_\alpha \}$ as
\begin{eqnarray}\label{eq:polyeta}
  \eta_\alpha (z_\alpha, w_\alpha) = \sum_d \varepsilon_\alpha^{(d)} (z_\alpha) \cdot (w_\alpha)^{\otimes d}   
\end{eqnarray}
where $\varepsilon_\alpha^{(d)} (z_\alpha)$ is a (symmetric) tensor-valued function on $U_\alpha$ and
\begin{eqnarray}
  \varepsilon_\alpha^{(d)} \cdot (w_\alpha)^{\otimes d} := 
                              \sum_{i_1, \dots, i_d = 1}^{r} (\varepsilon_\alpha^{(d)})_{i_1 i_2\dots i_d} w_\alpha^{i_1} \cdots w_\alpha^{i_d} .
\end{eqnarray}
On $U_{\alpha\beta}$ one has 
\begin{eqnarray}
  (\varepsilon_\alpha^{(d)})_{i_1 \dots i_d} =  \sum_{j_1, \dots, j_d = 1}^{r}
                  E_{\alpha \beta}
                  (V^*_{\alpha\beta})_{i_1}^{\phantom{i_1}j_1} \cdots  (V^*_{\alpha\beta})_{i_d}^{\phantom{i_d}j_d}
                            (\varepsilon_\beta^{(d)})_{j_1 \dots j_d} \nonumber
\end{eqnarray}
where $E_{\alpha \beta}$ are the transition functions for $E$ and $V^*_{\alpha\beta} = (V^T_{\alpha\beta})^{-1}$, 
i.e. $\{ \varepsilon_\alpha^{(d)} \}$ is a section of $E \otimes \Sym^d V^*$ on $\cup U_\alpha$.
It is easy to see that the previous expansion, when applied to \v{C}ech cocycles, 
commutes with the \v{C}ech differential and gives the explicit isomorphism of 
Lemma \ref{lem:isomorph}.  

In particular, we can apply the previous considerations to sections 
$\{ \zeta_\alpha (z_\alpha, w_\alpha) \}$ and $\{ \omega_\alpha (z_\alpha, w_\alpha) \}$,
respectively of $\pi^* T_\Sigma$ and  of $\pi^* V$ on $\cup W_\alpha \simeq U_\alpha \times \mathbb{C}^r$,
and expand them in polynomials as
\begin{eqnarray}\label{eq:polyexpansion}
  \zeta_\alpha (z_\alpha, w_\alpha) = \sum_d \rho_\alpha^{(d)} (z_\alpha) \cdot (w_\alpha)^{\otimes d}  \nonumber \\
  \omega_\alpha (z_\alpha, w_\alpha) = \sum_d \sigma_\alpha^{(d)} (z_\alpha) \cdot (w_\alpha)^{\otimes d} .
\end{eqnarray}

The Atiyah sequence on the \v{C}ech complexes of \v{C}ech cochains reads
\begin{eqnarray}
 0 \to \check{C}^{\bullet}(\mathcal{W}, \pi^* V) \to \check{C}^{\bullet}(\mathcal{W}, T_V)
   \to \check{C}^{\bullet}(\mathcal{W}, \pi^* T_M) \to 0 \nonumber
\end{eqnarray}
where
\begin{eqnarray}
 i^{\#}_q &:& \left\{ \omega_{\alpha_0 \alpha_1 \dots \alpha_q} \right\} 
              \to \left\{ (0, \omega_{\alpha_0 \alpha_1 \dots \alpha_q})  \right\} \nonumber \\
 p^{\#}_q &:& \left\{ (\zeta_{\alpha_0 \alpha_1 \dots \alpha_q},  \omega_{\alpha_0 \alpha_1 \dots \alpha_q} ) \right\} 
              \to \left\{ \zeta_{\alpha_0 \alpha_1 \dots \alpha_q} \right\}  \nonumber
\end{eqnarray} 
and, as usual, they induce maps in cohomology. The connecting morphisms are
\begin{eqnarray}
  \delta_q : \check{H}^q (\mathcal{W}, \pi^* T_M) \to \check{H}^{q+1} (\mathcal{W}, \pi^* V) \nonumber
\end{eqnarray}
with
\begin{eqnarray}
  \left.\delta_q \zeta\right|_{\alpha_0 \dots \alpha_{q+1}} = (-)^{q+1} R_{\alpha_{q+1}\alpha_q} 
                                                   \zeta_{\alpha_0 \dots \alpha_{q}} . \nonumber
\end{eqnarray}
Finally, let us notice that they can be decomposed, according to (\ref{eq:polyexpansion}), as
$\delta_q = \sum_d \delta_q^{(d)}$, with
\begin{eqnarray}
  \delta_q^{(d)} : \check{H}^q (\mathcal{U}, T_M \otimes \Sym^d V^*) \to
                   \check{H}^{q+1} (\mathcal{U}, V \otimes \Sym^{d+1} V^*),  \nonumber
\end{eqnarray}
where the maps $\delta_q^{(d)}$ act on cocyles as
\begin{eqnarray}
  \left( \left. \delta_q^{(d)} \rho^{(d)} \right|_{\alpha_0 \dots \alpha_{q+1}} \right)^i_{i_1 \dots i_{d+1}}
             = (-)^{q+1} \sum_{b=1}^{n} \frac{\partial (V_{\alpha_{q+1}\alpha_q})^i_{i_1} }{\partial z_{\alpha_q}^b} 
                           (\rho^{(d)}_{\alpha_0 \dots \alpha_{q}})^b_{i_2 \dots i_{d+1}} . \nonumber
\end{eqnarray}

\section{Calabi--Yau condition}\label{sec:CYcondition}

Let as before $\pi: V \to M$ be a rank-$r$ vector bundle over a $m$-dimensional 
nonsingular projective variety $M$ over $\mathbb{C}$.

The Calabi--Yau condition for the total space of $V$, 
i.e. the triviality of its canonical bundle $K_V \simeq \mathcal{O}_V$,
is equivalent to the condition $\det V \simeq K_M$.
This can be seen for example by using the Atiyah sequence of vector bundles on $V$ and
\begin{eqnarray}
  K_V \simeq (\det T_V)^{-1} \simeq \pi^* \left( (\det V)^{-1} K_M \right) .
\end{eqnarray}
In local coordinates the Calabi--Yau condition gives the
condition on the cocycles $\{J_{\alpha\beta}\}$ and $\{V_{\alpha\beta}\}$
\begin{eqnarray}
  \det J_{\alpha\beta} \det V_{\alpha\beta} = 1 \ .
\end{eqnarray}

\subsection{Another map in cohomology}

We now want to consider, among the first-order deformations of the total space of $V$,
those that do not deform the canonical bundle. 
In particular, in the case in which the total space of $V$ has trivial canonical bundle, these deformations
leave the canonical bundle trivial.
We have the following proposition.

\begin{lemma}
  The map
\begin{eqnarray}
  \partial^{\#} : H^1 (V, T_V) \to H^1 (V, \mathcal{O}_V) \ ,
\end{eqnarray}
induced in cohomology by the differential map (\emph{divergence})
\begin{eqnarray}
  \partial: T_V \to \mathcal{O}_V
\end{eqnarray}
associates to a first-order deformation $\delta_\epsilon V$ of the total space of $V$, 
a first-order deformation $\delta_\epsilon K_V$ of the canonical bundle $K_V$ of the total space of $V$.
Its kernel gives the deformations that leave invariant the canonical bundle
(\emph{Calabi--Yau deformations}).

\end{lemma}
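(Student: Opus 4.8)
The plan is to prove the statement by a direct \v{C}ech computation, building on the description of the first-order deformations of the total space of $V$ given in the previous subsection. Such a deformation is represented by transition functions $\Phi_{\alpha\beta}+\epsilon\,\xi_{\alpha\beta}$ on $W_\alpha\cap W_\beta$, where $\Phi_{\alpha\beta}$ is the undeformed coordinate change (\ref{eq:vectorbundlecoordinates}), $\xi_{\alpha\beta}:=M_{\alpha\beta}(\zeta_{\alpha\beta},\omega_{\alpha\beta})$ is the deformation vector field written in the coordinates $u_\alpha=(z_\alpha^1,\dots,z_\alpha^m,w_\alpha^1,\dots,w_\alpha^r)$ of $W_\alpha$, and $\{(\zeta_{\alpha\beta},\omega_{\alpha\beta})\}$ is the corresponding $T_V$-cocycle. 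Since $M_{\alpha\beta}$ is the Jacobian of $\Phi_{\alpha\beta}$ with respect to the source coordinates $(z_\beta,w_\beta)$, the Jacobian of $\Phi_{\alpha\beta}+\epsilon\,\xi_{\alpha\beta}$ is $M_{\alpha\beta}+\epsilon\,\Theta_{\alpha\beta}$, with $\Theta_{\alpha\beta}$ the matrix of derivatives of the components of $\xi_{\alpha\beta}$ in the coordinates $(z_\beta,w_\beta)$; taking determinants and passing to $K_V=(\det T_V)^{-1}$, the canonical bundle of the deformed total space acquires the transition functions
\begin{eqnarray}
  (\det M_{\alpha\beta})^{-1}\big(1-\epsilon\,\mathrm{tr}(M_{\alpha\beta}^{-1}\Theta_{\alpha\beta})\big). \nonumber
\end{eqnarray}
Comparing with the reference cocycle $\{(\det M_{\alpha\beta})^{-1}\}$ of $K_V$ in the coordinate trivialization, the induced first-order deformation of $K_V$ is the class in $H^1(V,\mathcal{O}_V)$ of $\{-\,\mathrm{tr}(M_{\alpha\beta}^{-1}\Theta_{\alpha\beta})\}$, and the chain rule identifies $\mathrm{tr}(M_{\alpha\beta}^{-1}\Theta_{\alpha\beta})$ with the coordinate divergence $\sum_{i=1}^{m+r}\partial\xi^i_{\alpha\beta}/\partial u_\alpha^i$ of $\xi_{\alpha\beta}$. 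Thus this cocycle is exactly $\{-(\partial\xi)_{\alpha\beta}\}$, i.e. the image of the deformation cocycle under the divergence operator $\partial$ (with the sign convention $K_V=(\det T_V)^{-1}$).

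Next I would check that this assignment is well defined and linear. Since $\{(\partial\xi)_{\alpha\beta}\}$ is, by the computation above, the genuine \v{C}ech cocycle of the relative canonical bundle $K_{\mathcal{V}/S}$ of the honest family $p\colon\mathcal{V}\to S$ over $S=\mathrm{Spec}(\mathbb{C}[\epsilon]/\epsilon^2)$, measured against the fixed reference cocycle of $K_V$, it is automatically a $1$-cocycle; and it is a coboundary precisely when $\mathcal{V}$ is equivalent, by a coordinate change of the type used in the previous subsection, to the trivial family, i.e. precisely when $\{(\zeta_{\alpha\beta},\omega_{\alpha\beta})\}$ is a $T_V$-coboundary. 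The dependence on the cocycle being manifestly $\mathbb{C}$-linear, the assignment descends to a well-defined linear map $\partial^{\#}\colon H^1(V,T_V)\to H^1(V,\mathcal{O}_V)$, which by construction is the map induced in cohomology by $\partial\colon T_V\to\mathcal{O}_V$ and which carries the class of a first-order deformation $\delta_\epsilon V$ to the class of the corresponding first-order deformation $\delta_\epsilon K_V$. (If one wishes, $\partial$ can be split through the Atiyah sequence (\ref{eq:atiyahsequence}) into a vertical part $\pi^*V\to\mathcal{O}_V$ and a horizontal part $\pi^*T_M\to\mathcal{O}_V$, and the analysis of Section~\ref{sec:cech} can be rerun; this is not needed for the present statement.)

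Finally, the assertion on the kernel follows at once: $\partial^{\#}[\xi]=0$ says that $\delta_\epsilon K_V$ is the trivial deformation, i.e. that $K_{\mathcal{V}/S}$ coincides to first order with the reference line bundle given by $\{(\det M_{\alpha\beta})^{-1}\}$. When the total space of $V$ is Calabi--Yau, the condition $\det J_{\alpha\beta}\,\det V_{\alpha\beta}=1$ gives $\det M_{\alpha\beta}=1$, so that reference bundle is trivial, and $\partial^{\#}[\xi]=0$ means exactly that the deformed total space again has trivial canonical bundle, whereas $\partial^{\#}[\xi]\neq 0$ forces $K_V$ to deform nontrivially and hence to cease being trivial; therefore $\ker\partial^{\#}$ is the space of Calabi--Yau deformations. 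The one point that requires care is this cocycle computation: the coordinate divergence $\sum_i\partial\xi^i/\partial u_\alpha^i$ is \emph{not} on its own a globally defined operator $T_V\to\mathcal{O}_V$, since under a change of chart it changes by $\xi(\log\det M_{\alpha\beta})$, so the identification of $\{-\mathrm{tr}(M_{\alpha\beta}^{-1}\Theta_{\alpha\beta})\}$ with the image of the deformation cocycle under a bona fide sheaf map $\partial$ must be carried out with the \v{C}ech sign and trivialization conventions of Section~\ref{sec:cech} fixed throughout. In the Calabi--Yau case all those Jacobians have determinant $1$, the ambiguity disappears, and $\partial$ is simply $\xi\mapsto L_\xi\Omega/\Omega$ for any nowhere-vanishing global section $\Omega$ of $K_V$.
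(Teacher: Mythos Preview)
Your proof is correct and follows essentially the same route as the paper: compute the Jacobian of the deformed transition functions, take its determinant using $\det(1+\epsilon A)=1+\epsilon\,\mathrm{tr}\,A$, and identify the trace term $\mathrm{tr}(M_{\alpha\beta}^{-1}\Theta_{\alpha\beta})$ with the coordinate divergence of the deformation vector field. The paper's $N_{\alpha\beta}$ is your $\Theta_{\alpha\beta}$, and it records the same two trivialization formulas (in $U_\alpha$ and in $U_\beta$, the latter with the extra $\kappa_{\alpha\beta}=\ln(\det J_{\alpha\beta}\det V_{\alpha\beta})$ term) that you flag in your final paragraph; your observation that $\partial$ is a bona fide sheaf morphism only in the Calabi--Yau case is exactly the content of that $\kappa_{\alpha\beta}$ computation.
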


\begin{proof}
Recall the transition functions for $V_\epsilon$ (with $\epsilon^2 = 0$)
\begin{eqnarray}
   \left(\begin{array}{c}
          z_{\alpha} \\
          w_{\alpha}
         \end{array} \right) =    
   \left(\begin{array}{c}
          f_{\alpha \beta} (z_\beta) \\
          V_{\alpha\beta} (z_\beta)  w_{\beta}
         \end{array} \right) +
   \epsilon M_{\alpha\beta} (z_\beta, w_\beta)
   \left(\begin{array}{c}
           \zeta_{\alpha\beta} \\
           \omega_{\alpha \beta}
         \end{array} \right) \nonumber
\end{eqnarray}
where $\{(\zeta_{\alpha\beta}, \omega_{\alpha\beta})\}$ are cocycle representatives of elements of $\check{H}^1 (V, T_V)$.
The transition functions for the top forms are
\begin{eqnarray}
 && \de z^1_\alpha \wedge \cdots \wedge \de z^m_\alpha \wedge \de w^1_\alpha \wedge \cdots \wedge \de w_\alpha^r = \nonumber\\
 &&\quad \det M_{\alpha\beta} \det \left( 1 + \epsilon M_{\alpha\beta}^{-1} N_{\alpha \beta} \right) 
  \de z^1_\beta \wedge \cdots \wedge \de z^m_\beta \wedge \de w^1_\beta \wedge \cdots \wedge \de w^r_\beta  , \nonumber
\end{eqnarray}
where
\begin{eqnarray}
 N_{\alpha\beta} =  \left( \begin{array}{ll} 
                            \frac{\partial}{\partial z_\beta} &  \frac{\partial}{\partial w_\beta}
                         \end{array} \right) \otimes M_{\alpha\beta} 
                      \left( \begin{array}{l} 
                             \zeta_{\alpha\beta} \\
                             \omega_{\alpha\beta}
                         \end{array} \right)   . 
\nonumber
\end{eqnarray}
Moreover, since $\epsilon^2 = 0$, we have the identity
\begin{eqnarray}
  \det \left( 1 + \epsilon M^{-1} N \right) = 
         \left( 1 + \epsilon \Tr M^{-1} N \right) .
         \nonumber
\end{eqnarray}
Notice now that we can write the term $\Tr M^{-1} N$, giving the first-order deformations of the canonical bundle, as
\begin{eqnarray}
\Tr M^{-1}_{\alpha\beta} N_{\alpha\beta} = 
                            \sum_{a=1}^{m} \frac{\partial \zeta^{(\alpha) a}_{\alpha\beta}}{\partial z^a_\alpha}
                                  + \sum_{k=1}^{r} \frac{\partial \omega^{(\alpha)k}_{\alpha\beta}}{\partial w^k_\alpha} \nonumber
\end{eqnarray}
where the superscript $(\alpha)$ on $\zeta^{(\alpha) a}_{\alpha\beta}$ and $\omega^{(\alpha)k}_{\alpha\beta}$ means that the cocycles
are represented in the trivialization for the chart $U_\alpha$.
In fact, 
\begin{eqnarray}
  \Tr (M^{-1}_{\alpha\beta} N_{\alpha\beta}) =
                      \left< M_{\alpha\beta}^{-1 T} (\frac{\partial}{\partial z_\beta}, \frac{\partial}{\partial w_\beta}), 
                                 M_{\alpha\beta} (\zeta_{\alpha\beta}, \omega_{\alpha\beta}) \right> . \nonumber
\end{eqnarray}
In the $U_\beta$-trivialization we have
\begin{eqnarray}
 \Tr (M^{-1}_{\alpha\beta} N_{\alpha\beta}) =
            \sum_{a=1}^m \frac{\partial \zeta_{\alpha\beta}^{a}}{\partial z_\beta^a} +  
                        \frac{\partial \kappa_{\alpha\beta}}{\partial z_\beta^a} \zeta_{\alpha\beta}^{a} 
            + \sum_{k=1}^{r} \frac{\partial \omega^{k}_{\alpha\beta}}{\partial w^k_\beta}      
           \nonumber
\end{eqnarray}
where $\kappa_{\alpha\beta}:=\ln (\det J_{\alpha\beta} \det V_{\alpha\beta})$, i.e. $\kappa_{\alpha\beta}=0$ in
the case in which $V$ is Calabi--Yau.
\end{proof}

\subsection{The condition reduced to the base}

Given a vector bundle $V$ and its dual $V^*$, we have natural contraction maps
(also called internal products)
\begin{eqnarray}
  c_{(p,q)}: \Sym^p V \otimes \Sym^{p+q} V^* \to \Sym^{q} V^* . \nonumber
\end{eqnarray}

Going back to the Calabi--Yau condition, we have the following.

\begin{lemma}
 The map 
\begin{eqnarray}
  \left. \partial^{\#} \circ p^{\#} \right|_d : H^1(M, V \otimes \Sym^{d} V^*) \to H^1(M, \Sym^{d-1} V^*)  \nonumber
\end{eqnarray}
is the map induced in cohomology by the contraction map $c_{(1,d-1)}$.
\end{lemma}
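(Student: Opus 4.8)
The plan is to turn the statement into an identity of \v{C}ech $1$-cochains on $\mathcal{U}$, using the explicit local description of $\partial^{\#}$ obtained in the proof of the previous lemma together with the polynomial expansion of Section~\ref{sec:cech}. The only structural point needed is that $c_{(1,d-1)}$ is an $\mathcal{O}_M$-linear morphism of vector bundles on $M$: it uses nothing but the canonical pairing of $V$ against $V^*$, hence is compatible with the transition functions $V_{\alpha\beta}$ and $V^*_{\alpha\beta}$ and induces maps on every $H^q(M,-)$. So it suffices to show that, at the cochain level and after restriction to fibrewise polynomial degree $d$, the divergence of a vertical deformation coincides with the contraction.

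First I would fix a convenient representative. Under Lemma~\ref{lem:isomorph} and the expansion \eqref{eq:polyeta}, a class $[\sigma^{(d)}]\in H^1(M,V\otimes\Sym^d V^*)$ is carried into $H^1(V,T_V)$, via the inclusion $\pi^*V\hookrightarrow T_V$ of the Atiyah sequence \eqref{eq:atiyahsequence}, by the purely vertical \v{C}ech cocycle $\{(0,\omega_{\alpha\beta})\}$ — so $\zeta_{\alpha\beta}=0$ — whose fibre part is homogeneous of degree $d$:
\begin{eqnarray}
  \omega^k_{\alpha\beta}(z_\beta,w_\beta)=\sum_{i_1,\dots,i_d}(\sigma^{(d)}_{\alpha\beta})^k{}_{i_1\dots i_d}\,w_\beta^{i_1}\cdots w_\beta^{i_d}. \nonumber
\end{eqnarray}
That this is again a cocycle, and that the assignment commutes with $\delta$, is precisely what was observed in Section~\ref{sec:cech}. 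Composed with $\partial^{\#}$, this is the map in the statement.

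Next I would substitute this cocycle into the formula for $\partial^{\#}$ from the previous lemma. Because $\zeta_{\alpha\beta}=0$, the term with the $z_\beta$-derivatives and the one involving $\kappa_{\alpha\beta}$ both disappear, and only the fibre divergence survives:
\begin{eqnarray}
  \bigl(\partial^{\#}\{(0,\omega)\}\bigr)_{\alpha\beta}=\sum_{k=1}^{r}\frac{\partial\omega^k_{\alpha\beta}}{\partial w^k_\beta}=d\sum_{k,\,i_2,\dots,i_d}(\sigma^{(d)}_{\alpha\beta})^k{}_{k\,i_2\dots i_d}\,w_\beta^{i_2}\cdots w_\beta^{i_d}, \nonumber
\end{eqnarray}
using the symmetry of $\sigma^{(d)}$ in its lower indices. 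In particular the output is homogeneous of degree $d-1$, so the composite is diagonal for the polynomial grading and its degree-$d$ block runs from $H^1(M,V\otimes\Sym^d V^*)$ to $H^1(M,\Sym^{d-1}V^*)$, as stated. Finally, comparing the right-hand side with the definition of $c_{(1,d-1)}$ — whose only effect on $V\otimes\Sym^d V^*$ is to contract the $V$-index against a $V^*$-index — one recognizes it as the \v{C}ech representative of $c_{(1,d-1)}(\sigma^{(d)})$, the combinatorial normalization of $c_{(1,d-1)}$ being the one dictated by the expansion convention of Section~\ref{sec:cech}. Since both $\partial^{\#}$ and $c_{(1,d-1)}$ commute with the \v{C}ech differential, this cochain identity descends to $H^1$ and yields the claim.

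The only real work is bookkeeping: one must keep the two trivialization conventions (bare indices versus $(\alpha)$-superscripted ones) straight when substituting into the divergence formula, and check honestly that the $z_\beta$-derivative and $\kappa_{\alpha\beta}$ contributions vanish on a vertical cocycle. I expect no conceptual obstacle: once the local formula for $\partial^{\#}$ is available and the normalization of $c_{(1,d-1)}$ is fixed, the identity is forced.
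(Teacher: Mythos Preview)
Your argument is correct: the paper in fact gives no proof of this lemma, and your direct \v{C}ech computation---feeding a purely vertical, degree-$d$ cocycle $(0,\omega_{\alpha\beta})$ into the divergence formula from the preceding lemma and identifying $\sum_k\partial\omega^k_{\alpha\beta}/\partial w^k_\beta$ with the contraction---is exactly the natural verification in the style of Section~\ref{sec:cech}. One notational point worth flagging: the paper writes $p^{\#}$ in the statement, but $p^{\#}$ was defined as the projection to $\pi^*T_M$; given the stated domain $H^1(M,V\otimes\Sym^d V^*)$, the intended map must be the inclusion $i^{\#}$, which is precisely what you used.
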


\section{Dimension one} 

Let $\pi : V \to \Sigma$ a rank $k$ vector bundle on a nonsingular curve $\Sigma$.
In this case, the exact sequence of Lemma \ref{exactseq} stops in grade $1$
and we do not have higher obstructions. Now, we could repeat step-by-step the 
passages of the previous sections, considering the simplifications of the case 
of a curve. Instead, we take a slightly different viewpoint.

Let us first remark that to study deformations of the total space of $V$, we can reduce 
to the case in which $V$ is a sum of line bundles, in the following sense.
Since every vector bundle of rank $r>1$ over a nonsingular curve contains
a line bundle as a sub-bundle,
we have a sequence of inclusions of vector sub-bundles
\begin{eqnarray}
\phi_1 = V_1 \subset V_2 \subset \dots \subset V_{r-1} \subset V_r = V \nonumber
\end{eqnarray}
and extensions 
\begin{eqnarray}
  && 0 \to \phi_1 \to V_2 \to \phi_2 \to 0 \nonumber \\
  && 0 \to V_2 \to V_3 \to \phi_3 \to 0 \nonumber \\
  && \cdots \nonumber \\
  && 0 \to V_{r-1} \to V \to \phi_r \to 0 \ . \nonumber
\end{eqnarray}
This implies, by using the openness of versality theorem, 
see \cite{Artin}, a sequence of inclusions of versal deformations
\begin{eqnarray}\label{incl}
  \Def (V) \subset \Def (V_{r-1} \oplus \phi_r) \subset \cdots \subset
                    \Def (\phi_1 \oplus \cdots \oplus \phi_r) \ . \nonumber
\end{eqnarray}

In the case in which $V \simeq V_0 = \phi_1 \oplus \dots \oplus \phi_r$, the transition functions for the deformation have the form
\begin{eqnarray}
  \left\{ 
     \begin{array}{rcl}
        z_\alpha    &=& f_{\alpha \beta} (z_\beta) + \epsilon f'_{\alpha \beta} (z_\beta) \zeta_{\alpha\beta}(z_\beta, \omega_\beta) \\
        w^j_\alpha  &=& \phi^j_{\alpha\beta} (z_\beta) \left( w^j_\beta + \epsilon \omega^j_{\alpha\beta}(z_\beta, w_\beta) + 
                                        \epsilon (\phi^i_{\alpha\beta})^{-1}\frac{\partial \phi^i_{\alpha\beta}}{\partial z_\beta} w^i_\beta \zeta_{\alpha\beta}\right) 
\\
     \end{array}
  \right. \nonumber \\
 \qquad 
                            \mathrm{for} \ j=1, \dots, r , \nonumber
\end{eqnarray}
where $\{\phi^j_{\alpha\beta}\}$ are the transition functions for the line bundles $\phi_j$. 
Moreover we have
\begin{eqnarray}\label{eq:symsplit}
  \Sym^{d} \left( \phi_1 \oplus \cdots \oplus \phi_r \right)^{*} = 
               \bigoplus_{\genfrac{}{}{0pt}{}{m_1, \dots, m_r=0}{m_1 + \cdots + m_r = d}}^{d} \phi_1^{-m_1} \cdots \phi_r^{-m_r} .  
\end{eqnarray}

If we write the deformations in the form
\begin{eqnarray}
   \zeta_{\alpha\beta} (z_\beta, w_\beta) &: =& \sum_{m_1,\dots, m_r}  \rho^{(m_1 \cdots m_r)}_{\alpha\beta} (z_\beta)     (w^1_\beta)^{m_1} \cdots (w^r_\beta)^{m_r} \nonumber \\
   \omega^j_{\alpha\beta} (z_\beta, w_\beta) &: =& \sum_{m_1,\dots, m_r}  \sigma^{j (m_1 \cdots m_r)}_{\alpha\beta}  (z_\beta)   (w^1_\beta)^{m_1} \cdots (w^r_\beta)^{m_r} \nonumber 
\end{eqnarray}
then we have, as a simple corollary of the previous lemmas and by using (\ref{eq:symsplit}), or by direct computation, 
\begin{eqnarray}
\{ \rho^{(m_1 \cdots m_r)}_{\alpha\beta} \} \in  H^1(\Sigma, K_\Sigma^{-1} \phi_1^{-m_1} \cdots \phi_r^{-m_r}) \nonumber 
\end{eqnarray}
and, for $\{\rho_{\alpha\beta}^{(m_1 \cdots m_j-1 \cdots m_r)}\}=0$,
\begin{eqnarray}
  \{ \sigma^{j (m_1 \cdots m_r)}_{\alpha\beta}\} \in  H^1(\Sigma,  \phi_1^{-m_1}\cdots \phi_j^{-(m_j-1)}\cdots \phi_r^{-m_r} ) .  \nonumber
\end{eqnarray}

The Calabi--Yau condition on each intersection $U_{\alpha \beta}$ reads
\begin{eqnarray}
  \frac{\partial \zeta_{\alpha\beta}}{\partial z_\beta} 
                  + \left( \frac{\partial}{\partial z_\beta} \ln f'_{\alpha\beta} \phi^1_{\alpha\beta}\cdots\phi^r_{\alpha\beta}\right) \zeta_{\alpha\beta} 
                  + \sum_{k=1}^{r}  \frac{\partial \omega_{\alpha\beta}^k}{\partial w_\beta^k} = 0 ,
\end{eqnarray}
that is, for the Calabi--Yau case $f' \phi^1\cdots\phi^r = 1$, 
\begin{eqnarray}
  \frac{\partial \zeta_{\alpha\beta}}{\partial z_\beta} + \sum_{k=1}^{r}  \frac{\partial \omega_{\alpha\beta}^k}{\partial w_\beta^k} = 0 .
\end{eqnarray}
Expanding in powers of $w$ as before, the Calabi--Yau condition becomes
\begin{eqnarray}
  \frac{\partial}{\partial z_\beta}  \rho^{(m_1 \cdots m_r)}_{\alpha\beta}
     = - \sum_{j=1}^r (m_j+1) \sigma^{j (m_1 \cdots m_j + 1 \cdots m_r)}_{\alpha\beta} . 
\end{eqnarray}

\paragraph{Acknowledgements.}
The author is grateful to
Giulio Bonelli, Ugo Bruzzo and Barbara Fantechi, for helpful discussions, 
to Marco Pacini, for participating to an earlier stage of this project,
and especially to Richard Thomas, for illuminating comments on the manuscript.
The author is supported in part by a fellowship from 
Fondazione Angelo della Riccia, Firenze.


\begin{thebibliography}{99}

\bibitem{Artin}
  M.~Artin, 
  ``Versal deformation and algebraic stacks,''
  Invent. Math\ {\bf 27} (1974) 165--189.

\bibitem{Fulton}
 W.~Fulton, 
``Intersection theory,''
Ergebnisse der Mathematik und ihrer Grenzgebiete,
Springer-Verlag, Berlin (1984).

\bibitem{Gross}
 M.~Gross,
 ``Primitive Calabi--Yau threefolds,''
 arXiv:alg-geom/9512002v1.

\bibitem{Thomas}
 R.~P.~Thomas,
 ``A holomorphic Casson invariant for Calabi-Yau 3-folds, 
   and bundles on K3 fibrations,''
 J. Differential Geom., 54, 367-438, 2000.
 arXiv:math.AG/9806111.


\end{thebibliography}
\end{document}